\documentclass[11pt,oneside,a4paper,abstract=on,DIV=12]{scrartcl}
\usepackage{overpic}
\usepackage{multicol}
\usepackage[bottom]{footmisc}

\usepackage{lmodern}
\usepackage[T1]{fontenc}
\usepackage[utf8]{inputenc}
\usepackage{amsmath,amssymb,amsfonts}
\usepackage{commath}
\usepackage{mathtools}
\usepackage{amsthm}

\usepackage{hyperref}
\hypersetup{
  pdfauthor={Johannes Siegele, Hans-Peter Schröcker, Martin Pfurner},
  pdftitle={Space Kinematics and Projective Differential Geometry Over the Ring of Dual Numbers},
  pdfkeywords={rational motion, motion polynomial, factorization, vertical Darboux motion, helical motion, osculating line, osculating conic, null cone motion, linkage},
  hidelinks,
}

\usepackage{todonotes}

\usepackage{url}
\newtheorem{proposition}{Proposition}[section]
\newtheorem{theorem}[proposition]{Theorem}
\newtheorem{lemma}[proposition]{Lemma}
\newtheorem{corollary}[proposition]{Corollary}
\theoremstyle{remark}
\newtheorem{example}[proposition]{Example}
\newtheorem{remark}[proposition]{Remark}

\newcommand{\C}{\mathbb{C}}
\newcommand{\D}{\mathbb{D}}
\newcommand{\F}{\mathbb{F}}
\newcommand{\DI}{\mathbb{D}^\times}
\newcommand{\RI}{\mathbb{R}^\times}
\renewcommand{\H}{\mathbb{H}}
\renewcommand{\DH}{\mathbb{DH}}
\newcommand{\DHI}{\mathbb{DH}^\times}
\renewcommand{\P}{\mathbb{P}}
\newcommand{\R}{\mathbb{R}}
\newcommand{\PD}[1][3]{\P^{#1}(\D)}
\newcommand{\SE}[1][3]{\mathrm{SE}(#1)}
\newcommand{\qi}{\mathbf{i}}
\newcommand{\qj}{\mathbf{j}}
\newcommand{\qk}{\mathbf{k}}
\newcommand{\eps}{\varepsilon}
\newcommand{\Cj}[1]{{#1}^\star}
\newcommand{\SQ}{\mathcal{S}}
\newcommand{\EG}{E}
\newcommand{\NC}{\mathcal{N}}

\title{Space Kinematics and Projective Differential Geometry Over the Ring of Dual Numbers\textsuperscript{$\ast$}}
\author{Johannes Siegele, Hans-Peter Schröcker, Martin Pfurner\\
  Department of Basic Sciences in Engineering Sciences\\
  University of Innsbruck, Austria}

\setkomafont{title}{\normalfont\bfseries}
\setkomafont{sectioning}{\normalfont\bfseries}

\begin{document}

\maketitle

\insert\footins{\footnotesize \textsuperscript{$\ast$}Extended version of the
  paper ``Space Kinematics and Projective Differential Geometry Over the Ring of
  Dual Numbers'', published in ICGG 20200 -- Proceedings of the 19th
  International Conference on Geometry and Graphics, Springer, 2021.}

\begin{abstract}
  We study an isomorphism between the group of rigid body displacements and the group of dual quaternions modulo the dual number multiplicative group from the viewpoint of differential geometry in a projective space over the dual numbers. Some seemingly weird phenomena in this space have lucid kinematic interpretations. An example is the existence of non-straight curves with a continuum of osculating tangents which correspond to motions in a cylinder group with osculating vertical Darboux motions. We also suggest geometrically meaningful ways to select osculating conics of a curve in this projective space and illustrate their corresponding motions. Furthermore, we investigate factorizability of these special motions and use the obtained results for the construction of overconstrained linkages.   \\[1mm]
  {\em Key Words:} rational motion, motion polynomial, factorization, vertical
  Darboux motion, helical motion, osculating line, osculating conic, null cone
  motion, linkage
  \\[1mm]
  {\em MSC 2020:} 16S36 53A20 70B10 \end{abstract}

\section{Introduction}
\label{sec:introduction}

The eight-dimensional real algebra $\DH$ of dual quaternions provides a
well-known model for the group $\SE$ of rigid body displacements. Dual
quaternions with non-zero real norm represent elements of $\SE$ and are uniquely
determined up to real scalar multiples. In the projectivization $\P(\DH) \cong
\P^7(\R)$ they correspond to points of the Study quadric $\SQ$ minus an
exceptional subspace $\EG$ of dimension three \cite{husty12}.

The Study quadric model provides a rich geometric and algebraic environment for
investigating questions of space kinematics. However, its ``curved'' nature
poses serious problems in numerous applications. One way of getting around this
is to consider dual quaternions modulo multiplication by \emph{dual numbers}
instead of just real numbers. The locus of the ensuing geometry is then not the
set $\SQ \setminus \EG \subset \P^7(\R)$ but the projective space $\P^3(\D)$ of
dimension three over the dual numbers (minus a low dimensional subset). It
provides a \emph{linear} model of space kinematics which is certainly a big
advantage. However, it also comes with some rather counter-intuitive properties:
The connecting straight line of two points is no longer unique and there exist
curves with an osculating tangent in any of their points.

What seems rather strange from a traditional geometric viewpoint becomes much
more natural in a kinematic interpretation where straight lines in $\PD$
correspond to vertical Darboux motions. Two poses may be interpolated by an
infinity of vertical Darboux motions \cite{schroecker18} and motions in cylinder
groups, for example helical motions, admit osculating Darboux motions at any
instance. We demonstrate and illustrate this in
Section~\ref{sec:osculating-lines}.

In Section~\ref{sec:osculating-conics} we present some results on osculating
conics/motions. Generically, there exists a four dimensional set of osculating
conics in every curve point. Among them we find the well-known Bennett motions
\cite{brunnthaler05,hamann11} but we also suggest another type of osculating
conic with geometric significance. It allows an interpretation as osculating
circle in an elliptic geometry. We further investigate factorizations of
polynomial parametric representations of these motions and give a geometric
criterion for factorizability. The obtained results are used to construct
overconstrained linkages whose couplers perform such motions.

\section{Preliminaries}
\label{sec:preliminaries}

A dual number is an element of the factor ring $\D \coloneqq \R[\eps]/\langle
\eps^2 \rangle$. It is uniquely represented by a linear polynomial $a + \eps b$
in the indeterminate $\eps$ with coefficients $a$, $b \in \R$, the \emph{primal}
and \emph{dual part,} respectively. Sum and product of two dual numbers as
implied by this definition are
\begin{equation*}
  (a + \eps b) + (c + \eps d) = a + c + \eps(b + d),\quad
  (a + \eps b)(c + \eps d) = ac + \eps(ad + bc).
\end{equation*}
Multiplication obeys the rule $\eps^2 = 0$. Provided $a \neq 0$, the
multiplicative inverse of $a + \eps b$ exists and is given by $(a+\eps b)^{-1} =
a^{-1} - \eps ba^{-2}$. We denote the set of invertible dual numbers by~$\DI$.

\subsection{Projective Geometry over Dual Numbers}
\label{sec:projective-geometry}
Similar to common projective geometry over the real or complex numbers, we
can study projective geometry over the dual numbers. We focus on the projective
space $\PD$ of dimension three over the dual numbers as this will be the
relevant case for rigid body kinematics. The elements of $\PD$ are
equivalence classes of elements of $\D^4 \setminus \{0\}$ where two vectors $x$
and $y$ are considered equivalent if there exists an invertible dual number $a +
\eps b$ such that $(a + \eps b)x = y$. We denote equivalence classes by square
brackets, i.\,e. as $[x]$ where $x \in \D^4$ or as $[x_0,x_1,x_2,x_3]$ where $x_0$,
$x_1$, $x_2$, $x_3 \in \D$.

In spite of its formal similarity with $\P^3(\R)$ or $\P^3(\C)$, the space $\PD$
exhibits some rather unusual properties. Let us consider the connecting line of two points $[a]$ and $[b]$. For its definition we already have two
choices. It can be considered as point set
\begin{equation}
  \label{eq:1}
  \{ [\alpha a + \beta b] \mid (\alpha,\beta) \in \F^2, (\alpha,\beta) \neq (0, 0) \}
\end{equation}
where $\F = \R$ or $\F = \D$, respectively. We will reserve the word
\emph{straight line} for the case $\F = \R$. There are two reasons for this
preference: Firstly, it seems to be the common notion in projective geometry
over rings. Secondly, a straight line in this sense has real dimension one while
it has real dimension two otherwise. With regard to kinematics, this means that
a straight line describes an, again more common, one-parametric motion.

A first, possibly surprising, geometric property of $\PD$ refers to the
connecting straight lines of two points. In contrast to geometry over the real
numbers, it is no longer unique.

\begin{proposition}
  Let $c$ and $d\in\D^4$ be two points such that $c$ or $d$ has at least one
  entry with non-zero primal part. Provided $[c]$ and $[d] \in \PD$ do not
  coincide, they have infinitely many connecting straight lines. If only one of
  the points $c$ and $d$ has an entry with non-zero primal part, the real
  dimension of the set of all connecting straight lines is equal to one,
  otherwise the dimension is two.
\end{proposition}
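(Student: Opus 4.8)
The plan is to analyze when two distinct points $[c]$, $[d] \in \PD$ admit a connecting straight line (i.e., with $\F = \R$), and to parametrize the set of all such lines. Write $c = c_0 + \eps c_1$ and $d = d_0 + \eps d_1$ with $c_0, c_1, d_0, d_1 \in \R^4$. A straight line through $[c]$ and $[d]$ exists exactly when there is a third point $[e] = [e_0 + \eps e_1]$ collinear (over $\R$) with both, but the cleaner approach is: a straight line containing $[c]$ and $[d]$ is determined by any choice of dual-number representatives $\tilde c = (\alpha + \eps \beta) c$ and $\tilde d = (\gamma + \eps \delta) d$, since the $\R$-span $\{s \tilde c + t \tilde d \mid (s,t) \in \R^2 \setminus \{0\}\}$ is generally a different point set for different rescalings, whereas over $\R$ it would not be. So first I would make precise that the set of straight lines through the two points is in bijection with the set of $\R$-planes in $\D^4$ (viewed as an $8$-dimensional real vector space) that contain both the $2$-dimensional real subspaces $[c] = \{(\alpha + \eps\beta)c\}$ and $[d]$ — intersected with the condition that such a plane actually meets both equivalence classes in a line, not just a point.

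Next I would set up the counting. The equivalence class $[c] \subset \D^4$, as a real subspace, is $\R c_0 + \eps \R c_0 + \eps \R c_1$ if $c_0 \neq 0$ (three-dimensional when $c_1 \notin \R c_0$, else two-dimensional); the key invariant is whether $c$ has an entry with non-zero primal part, i.e. whether $c_0 \neq 0$. A straight line $L$ containing $[c]$ and $[d]$ corresponds to a choice of a representative $c' \in [c]$ and $d' \in [d]$ with $L = \{[s c' + t d']\}$; two choices $(c', d')$ and $(c'', d'')$ give the same line iff $c'' = \lambda c' + \mu d'$ and $d'' = \nu c' + \rho d'$ for real $\lambda, \mu, \nu, \rho$ with the $2\times 2$ matrix invertible, but also $c'' \in [c]$ forces the dual-rescaling structure. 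I would reduce this to: fixing $c' = c$, the line is determined by the choice of $d' = (\gamma + \eps \delta) d$ up to the real scaling of $d'$ and up to adding a real multiple of $c$; since the real scaling of $(\gamma + \eps \delta)$ is absorbed, the free parameter is $\delta/\gamma \in \R$ (assuming $\gamma \neq 0$) — giving a one-parameter family — plus possibly an extra parameter coming from rescaling $c$ itself by $(\alpha + \eps\beta)$, contributing another $\beta/\alpha \in \R$. I would then check when these two parameters produce genuinely distinct lines: rescaling $c$ by $1 + \eps \beta'$ changes the line iff $\eps c_1'$-type directions are not already in the span, which happens iff $c_0 \neq 0$.

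So the case analysis yields: if both $c$ and $d$ have non-zero primal part, both scaling parameters $\beta/\alpha$ and $\delta/\gamma$ act nontrivially and the lines sweep out a two-real-dimensional family; if exactly one of them (say $c$) has non-zero primal part while $d$ is purely dual ($d_0 = 0$, so $d = \eps d_1$), then rescaling $d$ by a dual number has no effect (since $(\gamma + \eps\delta)\eps d_1 = \gamma \eps d_1$), killing one parameter, and we get a one-dimensional family. The degenerate excluded case (both purely dual) is exactly where the proposition's hypothesis fails. Throughout I must verify that each candidate plane really contains $[c]$ and $[d]$ as full lines and that distinct parameter values give distinct point sets in $\PD$ (not merely distinct real planes).

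The main obstacle I anticipate is the bookkeeping of the equivalence: showing that the naive parameter count ($\beta/\alpha$ and $\delta/\gamma$, each in $\R$) does not overcount — i.e., that two different parameter pairs never yield the same straight line — and conversely that every connecting straight line arises this way. This requires carefully describing, for a given straight line $L$ through $[c]$ and $[d]$, which representatives in $[c]$ and $[d]$ it contains, and checking that $L \cap [c]$ is a single $\R$-line in $\D^4$ (so the representative of $c$ on $L$ is unique up to real scaling, pinning down $\beta/\alpha$), and similarly for $d$. Once that is established, the dimension statements follow immediately from whether the relevant rescaling acts freely, which is governed precisely by the non-vanishing of the primal parts.
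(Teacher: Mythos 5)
Your strategy is the paper's: write every connecting straight line as $\{[\alpha\gamma c+\beta\delta d]\}$ with $\gamma,\delta\in\DI$ and count the essential real parameters in $(\gamma,\delta)$. Your normalization (quotient by the separate real rescalings of $\gamma$ and $\delta$, keeping the two dual-to-primal ratios) differs from the paper's (which quotients by simultaneous multiplication of both factors by one invertible dual number), but both cut the four raw parameters down to two, and your treatment of the degenerate case --- the factor attached to a purely dual point acts only by real scaling, killing one parameter --- is exactly the paper's.

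However, the step you explicitly defer is where the real difficulty sits, and the test you propose for it --- that ``distinct parameter values give distinct point sets in $\PD$'' --- would fail. Multiplying both $\gamma$ and $\delta$ by the same unit $1+\eps v$ shifts both of your ratios by $v$, yet it replaces each point $[\alpha\gamma c+\beta\delta d]$ by $[(1+\eps v)(\alpha\gamma c+\beta\delta d)]$, which is the \emph{same} point of $\PD$; as point sets, only the difference of your two parameters is an invariant, and the dimension count would collapse from two to one (and from one to zero in the degenerate case). This simultaneous rescaling is precisely the identification the paper itself invokes to reduce four parameters to two, so it cannot be brushed aside. The dimensions asserted in the proposition are those of the family of real two-planes $\mathrm{span}_\R\{\gamma c,\delta d\}\subset\D^4$ (equivalently, of parameterizations up to real reparameterization), and with that notion of line identity the distinctness of distinct parameter pairs is an easy direct verification; you need to commit to that notion rather than to point sets, and note that checking the effect of each rescaling separately, as you outline, misses exactly the diagonal direction along which the two parameters interact. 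A smaller correction: the real span of the equivalence class $[c]$ is $\R c+\R\eps c_0$, which is two-dimensional whenever $c_0\neq0$, not the three-dimensional $\R c_0+\eps\R c_0+\eps\R c_1$ you write; in $(\alpha+\eps\beta)c$ the coefficients of $c_0$ and of $\eps c_1$ are both $\alpha$ and cannot be chosen independently.
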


\begin{proof}
  We may parameterize any straight line connecting the given points by
  \eqref{eq:1} where $a = \gamma c$, $b = \delta d$ and $\gamma$, $\delta \in
  \DI$. This gives four real parameters, the coefficients of $\gamma$ and
  $\delta$. But multiplying $c$ and $d$ simultaneously with the same invertible
  dual number yields identical lines. Thus, only two essential real parameters
  remain. If all entries of $c$ or $d$ have primal part zero, the dual part of $\gamma$ or $\delta$ does not affect the product $\gamma c$ or $\delta d$, respectively. Thus we only have one essential parameter.

\end{proof}

\subsection{Space Kinematics}
\label{sec:space-kinematics}

A \emph{quaternion} is an element of the algebra $\H$ generated by basis
elements $1$, $\qi$, $\qj$, $\qk$ with generating relations $\qi^2 = \qj^2 =
\qk^2 = \qi\qj\qk = -1$ over the real numbers. A \emph{dual} quaternion $q$ is
an element of the algebra $\DH$ with the same basis elements and generating
relations but over the \emph{dual numbers} $\D$. Thus, we may write $q = q_0 + q_1\qi
+ q_2\qj + q_3\qk$ with $q_0$, $q_1$, $q_2$, $q_3 \in \D$ or, separating primal
and dual parts, $q = p + \eps d$ where $p = p_0 + p_1\qi + p_2\qj + p_3\qk$ and
$d = d_0 + d_1\qi + d_2\qj + d_3\qk$ are elements of~$\H$.

The conjugate dual quaternion is $\Cj{q} = q_0 - q_1\qi - q_2\qj - q_3\qk =
\Cj{p} + \eps\Cj{d}$, the dual quaternion norm is $q\Cj{q}$. In terms of
(coefficients of) $p$ and $d$ it may be written as $q\Cj{q} = p\Cj{p} +
\eps(p\Cj{d} + d\Cj{p}) = p_0^2 + p_1^2 + p_2^2 + p_3^2 + 2\eps(p_0d_0 + p_1d_1
+ p_2d_2 + p_3d_3) \in \D$. A dual quaternion $q$ is invertible if and only if
its norm is invertible as a dual number. Its inverse is then given by
$q^{-1}=(q\Cj{q})^{-1}\Cj{q}$. The unit norm condition for dual quaternions
reads as
\begin{equation*}
  p\Cj{p} = 1,\quad
  p\Cj{d} + d\Cj{p} = 0.
\end{equation*}
Because the norm is multiplicative, the unit dual quaternions form a
multiplicative group $\DH_0^\times$. We embed $\R^3$ into $\DH$ via
$(x_1,x_2,x_3) \hookrightarrow 1 + \eps(x_1\qi + x_2\qj + x_3\qk)$ and define
the action of $q = p + \eps d \in \DH_0^\times$ on points of $\R^3$ in the usual
way as
\begin{equation}
  \label{eq:2}
  1 + \eps(x_1\qi + x_2\qj + x_3\qk) \mapsto 1 + \eps(y_1\qi + y_2\qj + y_3\qk) = (p - \eps d) x (\Cj{p} + \eps\Cj{d})
\end{equation}
\cite{husty12}. This action provides us with a double cover of $\SE$, the group
of rigid body displacements, by the group $\DH_0^\times$.

A slight modification of \eqref{eq:2} extends the action to points
$[x_0,x_1,x_2,x_3]$ in the projective extension $\P^3(\R)$ of~$\R^3$:
\begin{equation}
  \label{eq:3}
  [x_0 + \eps(x_1\qi + x_2\qj + x_3\qk)] \mapsto [y_0 + \eps(y_1\qi + y_2\qj + y_3\qk)] = [(p - \eps d) x (\Cj{p} + \eps\Cj{d})].
\end{equation}
This gives an isomorphism between the group of dual quaternions of non-zero real
norm modulo $\RI$, the real multiplicative group, and $\SE$. The unit norm
condition of \eqref{eq:2} is replaced by the condition that the norm of $q$ be
real but non-zero:
\begin{equation}
  \label{eq:4}
  p\Cj{d} + d\Cj{p} = 0,\quad p\Cj{p} \neq 0.
\end{equation}
This means that $[q] = [p + \eps d]$ is a point on the so-called \emph{Study
  quadric $\SQ$} given by the quadratic form $p\Cj{d} + d\Cj{p}$, minus the
\emph{null cone $\NC$} given by the singular quadratic form $p\Cj{p}$. The only
real points of $\NC$ are those of its three-dimensional vertex space $\EG$,
given by $p = 0$. We call it the \emph{exceptional generator.}

A crucial observation for this article is that even the real norm requirement
\eqref{eq:4} can be abandoned: As long as $q\Cj{q}$ is invertible, \eqref{eq:3}
will describe a valid action on $\P^3(\R)$ and provide a \emph{homomorphism}
from the group $\DHI$ of invertible dual quaternions modulo $\RI$ to $\SE$ or
even an \emph{isomorphism} between $\DHI/\DI$ and~$\SE$.

\begin{proposition}
  The groups $\DHI/\DI$ and $\SE$ are isomorphic via the
  action~\eqref{eq:3}.
\end{proposition}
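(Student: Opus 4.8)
The plan is to build the isomorphism $\DHI/\DI \to \SE[3]$ out of the already-discussed homomorphism and then check it is well defined, bijective and a group morphism. First I would recall from the preceding discussion that for any invertible dual quaternion $q = p + \eps d$ (i.e. $q\Cj q \in \DI$) the formula \eqref{eq:3} describes a valid action on $\P^3(\R)$, hence defines a map $\Phi\colon \DHI \to \SE[3]$, and that this map is a group homomorphism: substituting $q$ by $qr$ in \eqref{eq:3} and using associativity of $\DH$ together with $(qr)\Cj{(qr)} = q r \Cj r \Cj q$ shows $\Phi(qr) = \Phi(q)\Phi(r)$ (the action of a product is the composition of actions).

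Next I would identify the kernel of $\Phi$. If $q$ acts trivially on $\P^3(\R)$, then in particular it fixes $[1]$, i.e. $[(p-\eps d)\cdot 1\cdot(\Cj p + \eps \Cj d)] = [p\Cj p + \eps(p\Cj d - d\Cj p)] = [1]$; since $p\Cj p$ is the primal part and must be a nonzero real multiple of the primal part of $1$ (after clearing the equivalence), one gets $p \in \R\setminus\{0\}$ and then the dual part forces $d$ to be a real multiple of $p$, so $q \in \DI$. Conversely every element of $\DI \subset \DHI$ is central (dual numbers commute with all dual quaternions) and in \eqref{eq:3} it contributes $(a+\eps b)(\Cj a + \eps \Cj b)^{\star}$-type scalar factors that are absorbed by the projective bracket, so it acts trivially. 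Hence $\ker\Phi = \DI$ and $\Phi$ descends to an injective homomorphism $\bar\Phi\colon \DHI/\DI \hookrightarrow \SE[3]$.

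It then remains to prove surjectivity. Here I would invoke the classical fact already cited in the excerpt: via \eqref{eq:2}, the unit dual quaternions $\DH_0^\times$ surject onto $\SE[3]$ (it is the standard double cover). Since $\DH_0^\times \subset \DHI$ and the action \eqref{eq:3} restricts to \eqref{eq:2} on unit dual quaternions (embedding $\R^3$ into the affine chart $x_0 = 1$), every element of $\SE[3]$ is already in the image of $\Phi$ restricted to $\DH_0^\times$, a fortiori in the image of $\bar\Phi$. Therefore $\bar\Phi$ is a bijective group homomorphism, i.e. an isomorphism $\DHI/\DI \cong \SE[3]$.

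I expect the main obstacle to be the kernel computation: one must argue carefully that fixing enough points of $\P^3(\R)$ forces $p$ and $d$ to be (real) scalar multiples of a common invertible dual number, i.e. that the projective equivalence really does cut the stabilizer of the standard frame down to $\DI$ and nothing larger. Concretely I would test $\Phi(q)$ on $[1]$ and on the three points at infinity $[\,0 + \eps\qi\,]$, $[\,0 + \eps\qj\,]$, $[\,0 + \eps\qk\,]$ (equivalently on $\Cj p$-sandwiches of $\qi,\qj,\qk$): fixing $[1]$ already forces $p\Cj p \in \RI$ and $d$ proportional to $p$; demanding the other three be fixed then pins down $p$ up to a nonzero dual scalar. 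Everything else — well-definedness of the descended map, the homomorphism property, reduction of surjectivity to the classical covering — is routine once \eqref{eq:3} has been set up as in the excerpt.
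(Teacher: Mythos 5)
Your overall strategy differs from the paper's: for the hard direction (identical action implies a dual factor) the paper argues abstractly from the equality of dimensions of $\DHI/\DI$ and $\SE$ together with connectedness, whereas you compute the kernel of the action directly. A direct kernel computation is a legitimate and arguably more self-contained route, and your reductions for the homomorphism property and for surjectivity (restricting to the classical double cover by unit dual quaternions) are fine. But the kernel computation as you describe it has two genuine problems.

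First, the deduction from fixing $[1]$ is backwards. The image of $[1]$ is $[p\Cj{p} + \eps(p\Cj{d}-d\Cj{p})]$, and requiring this to equal $[1]$ forces only $p\Cj{d} = d\Cj{p}$, i.e.\ $p\Cj{d}\in\R$, hence $d$ is a real multiple of $p$; it does \emph{not} force $p\in\R\setminus\{0\}$ (for instance $q=\qk$ fixes $[1]$). Second, and more seriously, the frame you propose in order to pin down $p$ --- the origin $[1]$ together with the ideal points $[\eps\qi]$, $[\eps\qj]$, $[\eps\qk]$ --- is not enough: the half-turn $q=\qk$ sends $\eps\qi\mapsto-\eps\qi$, $\eps\qj\mapsto-\eps\qj$, $\eps\qk\mapsto\eps\qk$, so it fixes all four points of $\P^3(\R)$ \emph{projectively} while representing a nontrivial rotation. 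Points at infinity are only determined up to a real scalar and cannot detect the sign, so the stabilizer of your test set strictly contains $\DI$. The repair is easy: use \emph{finite} points, e.g.\ $[1]$, $[1+\eps\qi]$, $[1+\eps\qj]$. Once $d=\lambda p$ is known, the image of $[1+\eps\qi]$ is $[p\Cj{p}+\eps\, p\qi\Cj{p}]$, and equality with $[1+\eps\qi]$ pins the scalar via the $x_0$-coordinate, giving $p\qi\Cj{p}=p\Cj{p}\,\qi$, i.e.\ $p\qi=\qi p$; likewise $p\qj=\qj p$, and a quaternion commuting with $\qi$ and $\qj$ is real, so $q\in\DI$. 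Finally, in the easy direction your scalar factor is misidentified: multiplying $q$ by a dual number $a=a'+\eps a''$ contributes the factor $a_\eps a=a'^2\in\RI$ (using the $\eps$-conjugate, as in the paper), not a quaternion-conjugate expression; it is precisely because this factor is a nonzero \emph{real} that the bracket in $\P^3(\R)$ absorbs it. With these corrections your argument goes through.
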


\begin{proof}
  It is easy to see that $\DHI$ is homomorphic to $\SE$ via \eqref{eq:3}. In
  order to see that $\DHI/\DI$ is isomorphic, we have to show that dual
  multiples yield the same action and that identical action implies a dual
  factor.

  Using the notation $q_\eps \coloneqq p - \eps d$ for the $\eps$-conjugate of
  $q = p + \eps d$ we can write the right-hand side of \eqref{eq:3} as $q_\eps x
  \Cj{q}$. Multiplying $q$ with a dual number $a$ yields $(aq)_\eps x
  \Cj{(aq)} = a_\eps q_\eps x a \Cj{q} = (a_\eps a) q_\eps x \Cj{q}$. Because
  $a_\eps a$ equals the squared primal part of $a$, this does not change the
  action on $\P^3(\R)$. Existence of a dual factor from identical action follows
  from equal dimension of $\DHI/\DI$ and $\SE$ and the fact that these groups
  have only one connected component.
\end{proof}

Since all elements of $\DHI/\DI$ are points of $\P^3(\D)$, it is natural to
study space kinematics via the projective geometry of $\P^3(\D)$. This point of
view is not new. It played a role in \cite{pfurner16} and \cite{purwar10}. From
an old paper by C.~Segre \cite{segre12} we even infer that probably already
E.~Study and his disciples were aware of these connections in the first decades
of the 20th century.

\subsection{Straight Lines and Vertical Darboux Motions}
\label{sec:straight-lines}

Via the action \eqref{eq:3}, a curve in $\PD$ corresponds to a one-parametric
rigid body motion. In particular, polynomial curves yield motions with
polynomial trajectories in homogeneous coordinates, that is, \emph{rational
  motions.} The simplest example of such motions comes from straight lines in
$\PD$ which correspond to \emph{vertical Darboux motions}
\cite{purwar10,scharler17}. A vertical Darboux motion is the composition of a
unit speed rotation about a fixed axis with a harmonic oscillation along the
axis such that one full rotation corresponds to one oscillation period. Its
trajectories are bounded rational curves of degree two (ellipses or
straight-line segments). Rotations and translations are considered as special
cases of vertical Darboux motions with zero or infinite oscillation amplitude,
respectively.

\begin{figure}[tb]
  \centering
  \includegraphics[page=48]{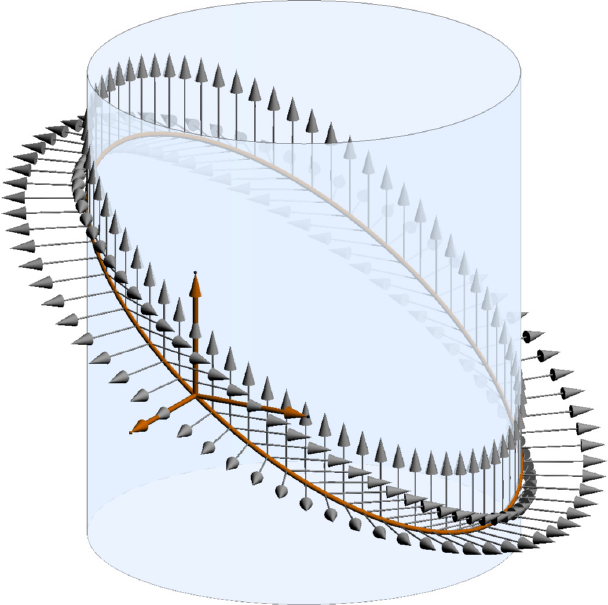}
  \includegraphics[]{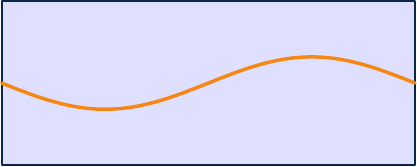}
  \caption{Vertical Darboux motion with some elliptic trajectories, right circular cylinder, and development.}
  \label{fig:darboux-motion}
\end{figure}

We illustrate a vertical Darboux motion in Figure~\ref{fig:darboux-motion}. This
figure also helps us explain a generally useful concept: Motions obtained as
composition of rotation around an axis and translation along the same axis have
trajectories on a right circular cylinder. Any curve $\gamma$ on such a cylinder
can be used to completely specify the motion by adding a Cartesian frame
consisting of cylinder normal, cylinder generator and horizontal cylinder
tangent. Instead of the curve on the cylinder, we may equally well consider its
image when developing the cylinder surface. In case of a vertical Darboux
motion, $\gamma$ is an ellipse. Its development is a sine curve which is scaled
in direction of the developed cylinder generators in order to adapt to the
oscillation's amplitude.

\section{Osculating Lines}
\label{sec:osculating-lines}

In this section we demonstrate that a helical motion and a vertical Darboux
motion can have second order contact at any parameter value. Since vertical
Darboux motions correspond to straight lines in $\PD$ this amounts to saying
that the curve corresponding to a helical motion has an osculating tangent at
any point. This is a remarkable difference to classical differential geometry
over the real numbers where this property characterizes straight lines. We also
prove that an infinity of osculating lines not only exists for a helical motion
but for any generic motion in a \emph{cylinder group,} i.\,e., a group generated
by rotations around and translations along a fixed axis.

A rotation around axis $\qk$ with rotation angle $\omega$ is given by the dual
quaternion $r = \cos\frac{\omega}{2} + \sin\frac{\omega}{2}\qk$, a translation with
oriented distance $\delta$ in direction of $\qk$ is given by $t = 1 -
\frac{1}{2}\eps \delta \qk$. Thus, a helical motion and a
Darboux motion $d$ with amplitude $c$ are obtained by substituting $p\omega$ and $c\sin\omega$,
respectively, for $\delta$ in the product~$rt$:
\begin{equation}
  \label{eq:5}
  \begin{aligned}
    h &= \cos\bigl( \tfrac{\omega}{2} \bigr) + \sin\bigl( \tfrac{\omega}{2} \bigr)\qk + \frac{p}{2}    \omega\eps(\sin\bigl( \tfrac{\omega}{2} \bigr)-\cos\bigl( \tfrac{\omega}{2} \bigr)\qk), \\
    d &= \cos\bigl( \tfrac{\omega}{2} \bigr) + \sin\bigl( \tfrac{\omega}{2} \bigr)\qk + \frac{c}{2}\sin\omega\eps(\sin\bigl( \tfrac{\omega}{2} \bigr)-\cos\bigl( \tfrac{\omega}{2} \bigr)\qk).
  \end{aligned}
\end{equation}
For $c=p$ the first two derivatives of $h$ and $d$ are equal,
\begin{equation*}
  \dod{h}{\omega}(0) = \dod{d}{\omega}(0) = \frac{1}{2}\qk - \frac{1}{2}p\eps\qk
  \quad\text{and}\quad
  \dod[2]{h}{\omega}(0) = \dod[2]{d}{\omega}(0) = -\frac{1}{4} + \frac{1}{2}p\eps,
\end{equation*}
while the third derivatives differ,
\begin{equation*}
  \dod[3]{h}{\omega}(0) = -\frac{1}{8}\qk + \frac{3}{8}p\eps\qk
  \neq -\frac{1}{8}\qk + \frac{7}{8}p\eps\qk = \dod[3]{d}{\omega}(0).
\end{equation*}
Thus, for $p = c$, the motions \eqref{eq:5} have second order contact at $\omega
= 0$. Since this parameter value has no particular meaning for a helical motion,
we may state that \emph{for any instance of a helical motion there exists a
  vertical Darboux motion with second order contact.}

Let us also verify that $d$ is actually a straight line in $\PD$ by multiplying
its parametric representation \eqref{eq:5} with a suitable dual number valued
function. Indeed, we have
\begin{equation*}
  \bigl(1 + p\eps\cos^2\bigl( \tfrac{\omega}{2} \bigr)\bigr)d
  = \cos\bigl( \tfrac{\omega}{2} \bigr)(1+p\eps) + \sin\bigl( \tfrac{\omega}{2} \bigr)\qk
\end{equation*}
which is a parametric representation of the straight line spanned by $1+p\eps$
and $\qk$. Summarizing, we can thus state

\begin{theorem}
  At any instance in time any helical motion, viewed as a curve in kinematic
  space $\PD$, has second order contact with a straight line. Yet, it is not a
  straight line itself.
\end{theorem}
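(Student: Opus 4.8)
The plan is to split the statement into its two assertions and to combine the explicit data already assembled before the statement with the kinematic dictionary of Section~\ref{sec:straight-lines}.

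For the first assertion — second order contact at every instant — I would argue that it suffices to establish it at $\omega = 0$, which is exactly the content of the derivative computation preceding the statement: for $c = p$ the curves $h$ and $d$ of \eqref{eq:5} share value and first two derivatives at $\omega = 0$, and the displayed reparametrization exhibits $d$ as the straight line spanned by $1 + p\eps$ and $\qk$. To pass from $\omega = 0$ to an arbitrary instant $\omega_0$, I would use that the helical displacements form a one-parameter subgroup of $\DHI/\DI$, so that left multiplication by $h(\omega_0)$ merely shifts the parameter of the helical curve, while on $\DH \cong \D^4$ it acts $\D$-linearly, hence $\R$-linearly, and therefore maps straight lines to straight lines and preserves contact order. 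The osculating straight line at $\omega_0$ is then the image under $h(\omega_0)$ of the one at $\omega = 0$. (Alternatively one simply repeats the derivative computation with $\omega$ replaced by $\omega_0 + \tau$; this is routine.) This also isolates the standing hypothesis that the pitch $p$ be nonzero: for $p = 0$ the ``helical motion'' degenerates to a pure rotation, which is itself a vertical Darboux motion.

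For the second assertion — the curve is genuinely not a straight line — I would argue by trajectories rather than by derivatives. Since in $\PD$ the osculating line need not be unique, the fact that $h$ and $d$ differ in their third derivatives does not by itself exclude that $h$ lies on some \emph{other} straight line. Instead: by Section~\ref{sec:straight-lines} a straight line in $\PD$ corresponds to a vertical Darboux motion, every trajectory of which is a bounded curve of degree two (an ellipse or a straight segment). If the image $\{[h(\omega)] \mid \omega \in \R\}$ were contained in a straight line $\ell$, then for every point $P$ the helical trajectory of $P$ would be a subset of the trajectory of $P$ under the Darboux motion of $\ell$, hence bounded. But for $p \neq 0$ the trajectory of any $P$ off the screw axis is a proper helix, which is unbounded — a contradiction. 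Hence $h$ is not a straight line.

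The main obstacle, and the only real subtlety, is this second part: resisting the temptation to conclude ``not a straight line'' from the mismatch of third derivatives, which is invalid precisely because of the non-uniqueness phenomena this paper is about. The boundedness comparison of trajectories is what makes the argument watertight while keeping it free of analyticity considerations; everything else is either the computation already on display or the symmetry reduction via the $\R$-linearity of left multiplication on $\DH$.
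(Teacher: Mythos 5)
Your proposal is correct, and for the first assertion it is essentially the paper's argument made precise: the paper performs exactly the derivative computation at $\omega=0$ that you cite and then dismisses the general instant with the remark that ``this parameter value has no particular meaning for a helical motion''; your reduction via left multiplication by $h(\omega_0)$, using that helical displacements form a one-parameter subgroup and that left multiplication is $\D$-linear on $\DH$ (hence maps straight lines to straight lines and preserves contact order), is the clean way to justify that remark. Where you genuinely depart from the paper is the second assertion. The paper's only evidence that $h$ is not a straight line is the mismatch of third derivatives with the exhibited osculating line $[\cos(\tfrac{\omega}{2})(1+p\eps)+\sin(\tfrac{\omega}{2})\qk]$, and, as you rightly observe, in $\PD$ this does not exclude that $h$ coincides with some \emph{other} straight line --- non-uniqueness of osculating tangents being precisely the phenomenon under discussion. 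Your trajectory argument (a straight line corresponds to a vertical Darboux motion with bounded degree-two trajectories, whereas a helix of nonzero pitch is unbounded) closes that gap with an argument the paper does not make explicit, and it also cleanly isolates the degenerate case $p=0$, where the ``helical'' motion is a rotation and hence \emph{is} a straight line. The cost is that you invoke the kinematic dictionary of Section~\ref{sec:straight-lines} rather than staying purely inside the differential calculus; the benefit is an argument that is actually watertight against the non-uniqueness the paper itself emphasizes.
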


This seemingly strange behavior allows a clear geometric interpretation that
also gives additional insight. Figure~\ref{fig:osculating-lines} displays
helical motion and osculating Darboux motion via the cylinder model we discussed
earlier. In the development, the helical motion corresponds to a straight line
while the Darboux motion is a sine curve with this line as inflection tangent.
Obviously, it is possible to determine uniquely a suitable sine function in
every point. It gives rise to the unique osculating vertical Darboux motion in a
point of the helical motion.

\begin{figure}[tb]
  \centering
  \includegraphics{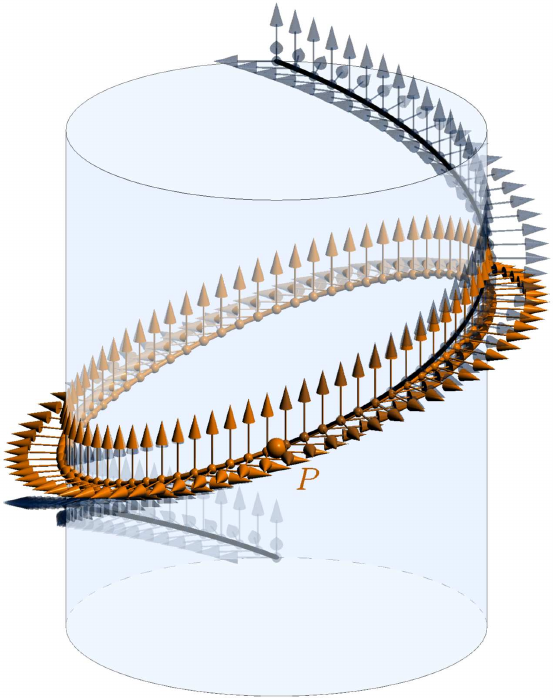}
  \includegraphics{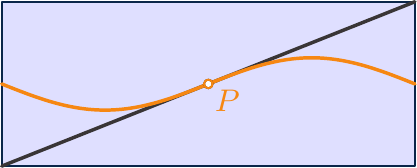}
  \caption{Geometric interpretation of osculating lines.}
  \label{fig:osculating-lines}
\end{figure}

Helical motions are not the only curves in $\PD$ susceptible to second order
approximation by straight lines in every point. An arbitrary motion in the
cylinder group $C$ corresponds to a curve in the development. There, an
osculating sine function can be drawn in any sufficiently smooth point and gives
rise to an osculating vertical Darboux motion. The possibility to do so is a
direct consequence of the following lemma: Among all candidate sine functions
there exist one with prescribed slope and curvature.

\begin{lemma}
  \label{lem:aphi}
  Given two real numbers $k$, $\varkappa \in \R$, there exists $a \in \R$ such
  that some point on the graph of the function $\varphi \mapsto a\sin\varphi$
  has slope $k$ and curvature~$\varkappa$.
\end{lemma}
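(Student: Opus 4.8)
The plan is to treat both the amplitude $a$ and the abscissa $\varphi$ of the sought point as unknowns. For the graph of $\varphi \mapsto a\sin\varphi$, the slope at parameter $\varphi$ is $a\cos\varphi$, and the signed curvature is $-a\sin\varphi/(1+a^2\cos^2\varphi)^{3/2}$. I would first impose the slope condition $a\cos\varphi = k$. The decisive observation is that this substitution collapses the denominator in the curvature formula to the constant $(1+k^2)^{3/2}$, so that the curvature condition becomes the \emph{linear} equation $-a\sin\varphi = \varkappa(1+k^2)^{3/2}$.

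Writing $m \coloneqq \varkappa(1+k^2)^{3/2}$, I am left with the system $a\cos\varphi = k$, $a\sin\varphi = -m$. Squaring and adding gives $a^2 = k^2 + m^2$. If $k = \varkappa = 0$, then $m = 0$ and $a = 0$ works, the graph being the (flat, zero-curvature) $\varphi$-axis. Otherwise I set $a \coloneqq \sqrt{k^2+m^2} > 0$; since $(k/a, -m/a)$ then lies on the unit circle, there is a $\varphi$ with $\cos\varphi = k/a$ and $\sin\varphi = -m/a$, and for this pair $(a,\varphi)$ the graph of $\varphi \mapsto a\sin\varphi$ has slope $k$ and curvature $\varkappa$ at the point with abscissa $\varphi$.

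The computation is elementary, so there is no genuine obstacle; the only points requiring care are the bookkeeping around the curvature formula (its sign and the exponent $3/2$) and the handling of the degenerate case $k = \varkappa = 0$. If one works with unsigned curvature instead, the argument is identical, with the condition $|a\sin\varphi| = |m|$ in place of $a\sin\varphi = -m$.
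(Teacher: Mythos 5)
Your proof is correct and follows essentially the same route as the paper: both impose the slope condition first, note that the denominator of the curvature then collapses to the constant $(1+k^2)^{3/2}$, and solve the resulting system $a\cos\varphi = k$, $a\sin\varphi = -\varkappa(1+k^2)^{3/2}$ by reading off $(a,\varphi)$ as polar coordinates of the point $\bigl(k,-\varkappa(1+k^2)^{3/2}\bigr)$. Your explicit treatment of the degenerate case $k=\varkappa=0$ is a small tidy addition; the paper instead reduces to the first quadrant by a symmetry argument, but the substance is identical.
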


\begin{proof}
  The subgraphs corresponding to parameter intervals $[i\frac{\pi}{2},
  (i+1)\frac{\pi}{2}]$ for $i \in \{0,1,2,3\}$ are congruent and, up to
  respective signs, have points of equal slope and curvature. Thus, we may
  restrict to the case $i = 0$, $k \ge 0$, $\varkappa \le 0$ and search for $a >
  0$. Slope and curvature are given by
  \begin{equation*}
    k = a\cos\varphi \quad\text{and}\quad \varkappa = -\frac{a\sin\varphi}{(1+a^2\cos^2\varphi)^{3/2}}
  \end{equation*}
These identities can be written in the following way
\begin{align*}
	k = a\cos\varphi \quad\text{and}\quad -\varkappa(1+k^2)^{3/2} = a\sin\varphi.
\end{align*}
This allows us to view $a$ and $\varphi$ as the polar coordinates of the point
$(k,-\varkappa(1+k^2)^{3/2})$ which lies, due to our assumptions, in the first
quadrant. As its polar coordinates are determined (and even unique if $k$ and
$\varkappa$ are not both zero), we can find $a\ge 0$ and
$\varphi\in[0,\frac{\pi}{2}]$.

\end{proof}

\begin{corollary}
  Any sufficiently smooth motion in a cylinder group has an osculating Darboux
  motion in any of its points (at any instance).
\end{corollary}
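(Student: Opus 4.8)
The plan is to transport the statement into the cylinder model of Section~\ref{sec:straight-lines} and reduce it to Lemma~\ref{lem:aphi}. First I would fix the instance $\tau_0$ at which an osculating Darboux motion is sought. Writing the given cylinder-group motion near $\tau_0$ as the composition of a rotation about $\qk$ through an angle $\alpha(\tau)$ with a translation along $\qk$ by $\delta(\tau)$, the development of a suitable frame trajectory is, up to an irrelevant rescaling of the horizontal coordinate, the planar curve $\tau \mapsto (\alpha(\tau), \delta(\tau))$. Including in ``sufficiently smooth'' the harmless requirement that the angular velocity $\alpha'(\tau_0)$ be non-zero, I may use $\omega = \alpha(\tau)$ as a local parameter and present the developed curve as the graph $\omega \mapsto (\omega, f(\omega))$ of a $C^2$ function $f$.

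In this model the vertical Darboux motions about $\qk$ are precisely those whose developments are graphs of $\omega \mapsto a\sin(\omega + \phi) + v$: a phase $\phi$ means composing with a fixed rotation about $\qk$, a vertical offset $v$ means composing with a fixed translation along $\qk$, so such motions stay inside the cylinder group, and the amplitudes $a = 0$, $a = \infty$ recover the degenerate cases (pure rotation, pure translation). It therefore suffices to exhibit such a Darboux development having second order contact with $\omega \mapsto (\omega, f(\omega))$ at $\omega_0$. To this end I would apply Lemma~\ref{lem:aphi} with prescribed slope $f'(\omega_0)$ and prescribed curvature $f''(\omega_0)\bigl(1 + f'(\omega_0)^2\bigr)^{-3/2}$: it produces an amplitude $a$ and a parameter value $\varphi$ at which the graph of $\psi \mapsto a\sin\psi$ has exactly this slope and this curvature. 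Now choose the phase $\phi := \varphi - \omega_0$, so that the Darboux development $\omega \mapsto a\sin(\omega + \phi)$ realises the contact parameter $\varphi$ at $\omega = \omega_0$, and the vertical offset $v := f(\omega_0) - a\sin\varphi$, so that values match there as well. Since equality of value, slope and curvature at a point forces equality of first and second derivatives, the two developed curves have second order contact at $\omega_0$.

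It then remains to lift the conclusion back to $\PD$. A cylinder-group motion is determined by its developed curve through a correspondence that is smooth with smooth inverse, and straight lines in $\PD$ correspond to vertical Darboux motions (Section~\ref{sec:straight-lines}). Hence second order contact of the two developed curves at $\omega_0$ yields second order contact of the corresponding curves in $\PD$ at the point representing $\tau_0$; that is, the curve of the given motion has there an osculating straight line, namely the one belonging to the Darboux motion just constructed.

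I expect the real work to lie in two bookkeeping points rather than in Lemma~\ref{lem:aphi} itself. First, the reduction to a graph breaks down at a purely translational instant, where the developed curve has a vertical tangent and cannot be matched by a finite-amplitude sine; there the osculating Darboux motion degenerates to a pure translation along $\qk$, and this limiting case either needs a separate argument or is excluded by the genericity assumption on $\alpha'(\tau_0)$. Second, one must make precise that ``second order contact'' survives the passage from the developed curve to the curve in $\PD$: contact order is a priori parametrisation-dependent, and the point is that the rotation angle $\omega$ is an admissible motion parameter on both sides, so the derivative matching carried out in the development is exactly what renders the contact statement in $\PD$ meaningful.
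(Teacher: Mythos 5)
Your argument is exactly the one the paper intends: the corollary is stated without proof because it is meant to follow from the development picture of Section~\ref{sec:straight-lines} together with Lemma~\ref{lem:aphi} (osculating sine curve with prescribed slope and curvature), which is precisely the reduction you carry out. The two caveats you flag are the right ones, and the first (a purely translational instant, where the developed curve has a vertical tangent) is absorbed by the paper's convention that pure translations are vertical Darboux motions of infinite amplitude, so no genericity assumption on $\alpha'(\tau_0)$ is actually needed.
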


\section{Conic Sections}
\label{sec:osculating-conics}

We now turn our attention to conic sections in $\PD$. We study them as rational
curves of degree two. A parametric representation is simply a polynomial $c$ of
degree two in one indeterminate $t$ that serves as a real parameter. We assume
that $c$ has no scalar polynomial factor of positive degree and also that its
coefficients are linearly independent, as otherwise it
would parametrize a straight line or a point. A conic parameterizes a rational motion with
trajectories of degree at most four.

In line with the general philosophy of this article we should consider a
polynomial $c$ up to multiplication with a dual number valued function. However,
here it is sufficient to consider only dual number multiples, that is,
we consider only polynomial representations of minimal degree. In projective differential
geometry over the real numbers, a generic smooth space curve admits a two
parametric set of osculating conics in a generic point. In projective geometry
over the dual numbers, a further degree of freedom is added:

For the case of interpolating conics for three finitely separated points
$[c_0]$, $[c_1]$, $[c_2] \in \PD$, this is easy to see. An interpolating conic
may be parameterized as $[c(t)]$ where
\begin{equation*}
  c(t) = c_0 + (c_1 - c_0 - c_2)t + c_2t^2.
\end{equation*}
The points $[c_0]$, $[c_1]$, $[c_2]$ correspond to parameter values $t = 0$, $t
= 1$, and $t = \infty$, respectively. Obviously, different dual number multiples
of $c_0$, $c_1$ and $c_2$ yield different conics, unless the dual factor is the
same for all three points. We may use this freedom to have the dual factor $1$
for $c_1$ whence a general parametric representation for interpolating conics
can be written as
\begin{equation}
  \label{eq:6}
  c(t) = \gamma_0c_0 + (c_1 - \gamma_0c_0 - \gamma_2c_2)t + \gamma_2c_2t^2
\end{equation}
where $\gamma_0$ and $\gamma_2$ are invertible dual numbers. This gives the
claimed four degrees of freedom.

In view of Section~\ref{sec:osculating-lines} it is natural to ask for space
curves that admit a conic with even higher order contact in every point. We will
not pursue this question any further at this point. Instead, we present two
examples of osculating conics in this set with a special meaning for space
kinematics. In Section~\ref{sec:factorization} we discuss their factorization in
the sense of \cite{hegedus13} and use it for the construction of linkages.

\subsection{Bennett Motions}

The \emph{Bennett motion} is a well-known example of a quartic space motion
whose kinematic image in the ``classical'' sense is a conic section on the Study
quadric $\SQ$ and which is determined by three general finitely separated or
infinitesimally neighboring points in the Study quadric. In fact, we may simply
define it as a regular conic in the Study quadric that does not intersect the
exceptional generator $\EG$ \cite{brunnthaler05,hamann11}. In our context, we
can re-derive the motion from the following observation:

\begin{lemma}
  \label{lem:2}
  Given an invertible dual quaternion $p$ there exists an invertible dual number
  $a$ such that $ap$ has real norm. The dual number $a$ is determined up to a
  real multiple.
\end{lemma}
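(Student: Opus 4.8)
The plan is to write $p = p_{\mathrm{prim}} + \eps p_{\mathrm{dual}}$ with $p_{\mathrm{prim}}, p_{\mathrm{dual}} \in \H$, and to look for the scalar $a = a_0 + \eps a_1$ with $a_0 \neq 0$ that makes the dual quaternion norm $(ap)\Cj{(ap)}$ real, i.e.\ makes its dual part vanish. Since $p$ is invertible, its norm $p\Cj{p} = n_0 + \eps n_1$ has invertible primal part, so $n_0 = p_{\mathrm{prim}}\Cj{p_{\mathrm{prim}}} \neq 0$; this is the only hypothesis we get to use, and the whole argument should hinge on it.

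First I would compute $(ap)\Cj{(ap)} = a\Cj{a}\, p\Cj{p}$, using that the scalar $a$ commutes with everything and that conjugation of a scalar times a quaternion multiplies the norms. Here $a\Cj{a} = a^2$ is just the square of the dual number, namely $a_0^2 + 2\eps a_0 a_1$. Multiplying out, the dual part of $(ap)\Cj{(ap)} = (a_0^2 + 2\eps a_0 a_1)(n_0 + \eps n_1)$ is $a_0^2 n_1 + 2 a_0 a_1 n_0$. Setting this to zero and dividing by $a_0 \neq 0$ gives the single scalar linear equation $a_0 n_1 + 2 a_1 n_0 = 0$, hence $a_1 = -\tfrac{n_1}{2 n_0} a_0$, which makes sense precisely because $n_0$ is invertible. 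So for any nonzero choice of $a_0$ we obtain a valid $a$, and $a$ is thereby determined up to the free real scalar $a_0$ — which is exactly the claimed "determined up to a real multiple."

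The remaining point is to confirm that this $a$ is actually invertible as a dual number, but that is immediate since its primal part $a_0$ was chosen nonzero, and that the resulting real norm $a_0^2 n_0$ is nonzero, so $ap$ is again an invertible dual quaternion lying over the Study quadric minus the null cone. I do not anticipate a genuine obstacle here: the computation collapses to one linear equation and the invertibility hypothesis on $p$ is exactly what guarantees it is solvable. The only mild subtlety worth stating cleanly is the reduction from "norm is real" to the single equation $a_0 n_1 + 2a_1 n_0 = 0$, i.e.\ making explicit that $a_0 = 0$ is excluded a priori because we require $a \in \DI$, so dividing by $a_0$ is legitimate and no spurious solutions are lost.
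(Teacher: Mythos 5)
Your proposal is correct and follows essentially the same route as the paper's proof: the same decomposition into primal and dual parts, the same single homogeneous linear equation $a_0^2 n_1 + 2a_0a_1 n_0 = 0$ (the paper writes it as $a'^2(p'\Cj{p''}+p''\Cj{p'}) + 2a'a''p'\Cj{p'} = 0$), and the same use of invertibility of $p$ to exclude $a_0 = 0$ and guarantee solvability. The only cosmetic difference is that you first factor the norm as $a^2\,p\Cj{p}$ via multiplicativity, which is a slightly tidier way to organize the identical computation.
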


\begin{proof}
  Write $p = p' + \eps p''$ and $a = a' + \eps a''$ with quaternions $p'$, $p''$
  and real numbers $a'$, $a''$. The dual part of the norm of $ap$ then reads as
  $a'^2(p'\Cj{p''}+p''\Cj{p'}) + 2a'a''p'\Cj{p'}$. Both, $p'\Cj{p''} +
  p''\Cj{p'}$ and $p'\Cj{p'}$ are real numbers and the latter is different from
  zero (because $p$ is invertible). We may divide by $a'$ (because we want to find
  an invertible dual number $a$) so that ultimately $a = a' + \eps a''$ is
  determined, up to a real multiple, by one non-vanishing homegeneous linear
  equation. A (non-trivial) solution with $a' = 0$ is not possible because $p$ is invertible
  whence $p'\Cj{p'} \neq 0$.
\end{proof}

Returning to \eqref{eq:6}, we may assume $[c_0]$, $[c_1]$, $[c_2] \in \SQ$ as
otherwise we can multiply with suitable dual numbers by Lemma~\ref{lem:2}. Now
we are still free to multiply $c_0$, $c_1$, and $c_2$ with real numbers and it
is well-known (c.~f. for example \cite{brunnthaler05}) that this freedom is
enough to ensure that $[c(t)]$ lies on the Study quadric~$\SQ$.

Bennett motions are rational motions with entirely circular trajectories of
degree four. They appear as coupler motions of \emph{Bennett linkages}, that is,
spatial four-bar linkages with exceptional mobility \cite{brunnthaler05}. An
example for a Bennett motion can be found later in Figure~\ref{fig:osculating-motions}.

\subsection{Motions Based on Osculating Circles of Elliptic Geometry}

An important object for the kinematic geometry in $\PD$ is the null cone $\NC$.
It consists of points represented by non-invertible dual quaternions, a
property that is preserved under coordinate changes and thus makes $\NC$ a
\emph{geometric invariant}. With this in mind, it is natural to look for
osculating conics in special position with respect to $\NC$. For a general
parametric representation of the shape \eqref{eq:6} it is possible to determine
the dual factors $\gamma_0$, $\gamma_2 \in \D$ in such a way that the conic
parameterized by $[c(t)]$ is \emph{tangent to $\NC$ in two points.} In fact, if
we only consider \emph{real} factors, this amounts to determining a circle
through three points in the real elliptic plane with absolute conic $\NC \cap
\varphi$ where $\varphi$ is the conic's plane. For three finitely separated
points this problem has four solutions as can be seen in the spherical model of
elliptic geometry (Figure~\ref{fig:osculating-circles}). But this property does
not translate to three infinitesimally neighboring points as in the limit three
of the four circles converge to the curve tangent so that the osculating circle
is unique. This is also visualized in Figure~\ref{fig:osculating-circles}.

\begin{figure}[tb]
  \centering
  \includegraphics[page=1]{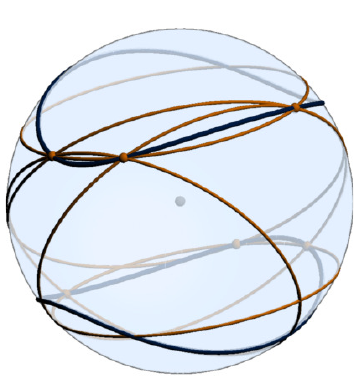}
  \includegraphics[page=20]{img/osculating-circles}
  \includegraphics[page=36]{img/osculating-circles}
  \caption{Circles through three points (left and middle) and osculating circle
    in elliptic geometry (right).}
  \label{fig:osculating-circles}
\end{figure}

In lack of a better name, we refer to the motions in question as \emph{quadratic
  null cone motions.} The four-dimensional set of osculating conics contains a
two-dimensional set of these motions. Their generic trajectories are rational of
degree four, not circular in general but tangent to the plane at infinity in two points. 
Figure~\ref{fig:osculating-motions} displays a null cone motion and a Bennett
motion that osculate at one pose which is drawn a little larger.

\begin{figure}[bt]
  \centering
  \begin{overpic}[trim=0 50 0 0, clip]{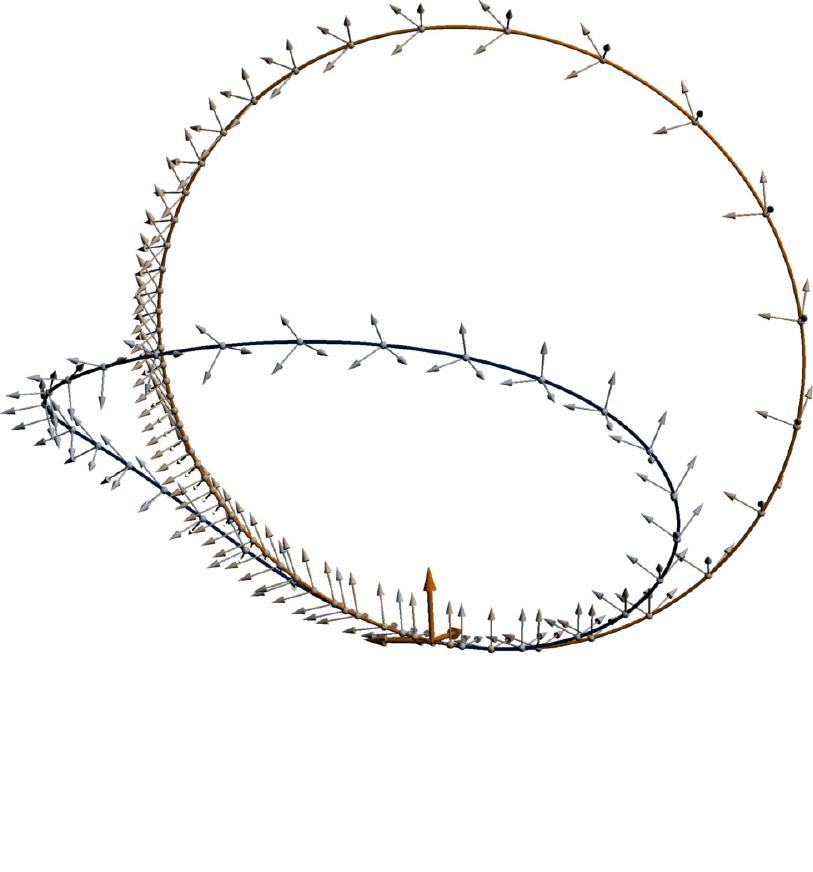}
    \put(40,72){\small Bennett motion}
    \put(28,49){\small osculating null cone motion}
  \end{overpic}
  \caption{A Bennet motion and an osculating null cone motion.}
  \label{fig:osculating-motions}
\end{figure}

\subsection{Factorization of Bennett Motions and Null Cone Motions}
\label{sec:factorization}

It is well known that a general conic section on the Study quadric (a Bennett
motion) occurs as the coupler motion of a closed spatial four-bar linkage
(Bennett linkage) \cite{brunnthaler05,hamann11}. One way to find the
corresponding linkage to a given Bennett motion is to compute different
factorizations of a parametric representation into linear polynomials over the
ring of dual quaternions. Each factor parametrizes a straight line on the Study
quadric and therefore, generically, a rotation \cite{hegedus13}. It can be
realized mechanically via a revolute joint. Combining the joints of two
factorizations yields the spatial four-bar linkage. For special conic sections
on the Study quadric, factorization and construction of linkages can be more
involved. It is possible that revolute joints have to be replaced by prismatic
(translation) joints \cite{hamann11,hegedus13}. It may also be the case that
only one or infinitely many factorizations exist.

The case of only one factorization is briefly mentioned in \cite{siegele20}.
This happens precisely if the Bennett motion is also a null cone motion. Since
essentially everything is known about the factorization of Bennett motions, we
now have a closer look at factorizability of null cone motions. We also discuss
how to construct a linkage to generate this motion, provided factorizations
exist. In $\P ^3(\D)$, straight lines no longer correspond to rotations but to
vertical Darboux motions with rotations and translations as special cases. Since
a vertical Darboux motion is a composition of a rotation and a harmonic
oscillation along the same axis, it can be realized by a cylindrical joint which
allows an independent rotation around and translation along a fixed axis. When
constructing linkages one will have to ensure that the cylindrical joints do not
introduce unwanted degrees of freedom.

Let $c(t)= c_0+(c_1-c_0-c_2)t+c_2t^2=p(t)+\eps d(t)$ be a conic section which is
tangent to $\NC$. We will assume that $c_2=1$ which can be achieved by a change
of coordinates. Since $c$ is tangent to the null cone in two points, we know
that the norm polynomial's primal part $p\Cj{p}$ is a real polynomial with two
roots of multiplicity two.

If the two roots coincide, i.e. $p\Cj{p}$ has one root with multiplicity four,
we deduce that this root is actually real. But then there exists a real
parameter value $t_0$ at which the norm of $p$ is zero whence $p(t_0) = 0$. This
implies that $p$ is the square of a linear polynomial $s\in\R[t]$. The
underlying motion is the translation along a quadratic curve with one point at
infinity, that is, a parabola or a quadratically parametrized half-line. A
factorization necessarily is of the form $c=(s+\eps d_1)(s+\eps d_2)$. But in
this case $s$ is a factor of $c$ and therefore $c$ does not parametrize a conic
but a line and it cannot be a quadratic translation. Therefore we assume for the
remainder of this section that the two roots of $p\Cj{p}$ are distinct.

\begin{theorem}
  \label{th:ncm-factorization}
  If $c = p + \eps d$ parametrizes a quadratic null-cone motion, a factorization
  exists precisely if $p\Cj{p}$ has two distinct roots $t_0$, $t_1 \in \C$ of
  multiplicity two and the points $[c(t_0)]$, $[c(t_1)]$ lie on the Study
  quadric.
\end{theorem}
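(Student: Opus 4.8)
The idea is to convert factorisability into a divisibility statement about the norm polynomial. Put $N \coloneqq c\Cj{c} = p\Cj{p} + \eps Q$ with $Q \coloneqq p\Cj{d} + d\Cj{p} \in \R[t]$ of degree at most three, and note that ``$[c(t_j)] \in \SQ$'' is literally ``$Q(t_j) = 0$''. Since $c_2 = 1$ and $c$ parametrises a quadratic null cone motion, $p\Cj{p}$ is monic of degree four with two roots of multiplicity two (the tangency parameters with $\NC$). If these two roots coincide, the discussion preceding the theorem shows that no factorisation into linear factors is possible without $c$ collapsing to a straight line; this already yields the necessity of the ``distinct roots'' clause, and for the sufficiency direction the clause is part of the hypothesis. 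So from now on $p\Cj{p} = (t-t_0)^2(t-t_1)^2$ with $t_0 \neq t_1$, and it remains to prove that $c$ factors into linear dual quaternion polynomials if and only if $Q(t_0) = Q(t_1) = 0$.

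For necessity, let $c = \ell_1\ell_2$ with monic linear $\ell_i = t - h_i \in \DH[t]$. Because $\ell_i\Cj{\ell_i} = t^2 - (h_i + \Cj{h_i})t + h_i\Cj{h_i}$ has coefficients in the centre $\D[t]$, one gets $N = (\ell_1\Cj{\ell_1})(\ell_2\Cj{\ell_2})$, and the primal part of $\ell_i\Cj{\ell_i}$ is a monic real quadratic with non-positive discriminant. The only ways to write $(t-t_0)^2(t-t_1)^2$ as a product of two such quadratics are $(t-t_0)(t-t_1)$ twice, forced when $t_0,t_1 \notin \R$ form a conjugate pair, and $(t-t_0)^2$ times $(t-t_1)^2$, forced when $t_0,t_1 \in \R$, in which case both $\ell_i$ are prismatic with $\ell_i\Cj{\ell_i} = (t-t_i-\eps s_i)^2$ for suitable $s_i \in \R$. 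In either case a short computation of the dual part of $N$ exhibits $(t-t_0)(t-t_1)$ as a divisor of $Q$, hence $Q(t_0) = Q(t_1) = 0$.

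For sufficiency, assume $t_0 \neq t_1$ and $Q(t_0) = Q(t_1) = 0$. Since $Q \in \R[t]$ and $t_0,t_1$ are either both real or complex conjugate, $M_1 \coloneqq (t-t_0)(t-t_1) \in \R[t]$ divides $Q$, so $N = M_1 M_2$ with $M_2 \coloneqq M_1 + \eps R \in \D[t]$ monic of degree two ($R \in \R[t]$, $\deg R \le 1$). I would then run the factorisation step of \cite{hegedus13}: right division of $c$ by the central polynomial $M_2$ leaves a remainder $\rho = at + b$ with $\rho\Cj{\rho} \equiv N \equiv 0 \pmod{M_2}$, hence $\rho\Cj{\rho} = a\Cj{a}\,M_2$. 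If $a$ is an invertible dual quaternion, cancelling the invertible dual number $a\Cj{a}$ gives $(t-h)(t-\Cj{h}) = M_2$ with $h \coloneqq -a^{-1}b$, and rewriting $c = M_2 + \rho = (t-\Cj{h})(t-h) + a(t-h)$ produces the factorisation $c = \bigl(t - (\Cj{h} - a)\bigr)(t-h)$, the two factors corresponding to $M_1$ and $M_2$. There remains the case of non-invertible $a$: then the primal part of $a$ vanishes, which, combined with $p\Cj{p} = (t-t_0)^2(t-t_1)^2$, forces $p = (t-t_0)(t-t_1)$, so $c$ is a quadratic translation; for these one factors directly by writing $c = (t-t_0-\eps u_0)(t-t_1-\eps u_1)$, which reduces the task to the quaternionic linear system $u_0 + u_1 = -d_1$, $t_1 u_0 + t_0 u_1 = d_0$ (where $d = d_1 t + d_0$), solvable because its determinant $t_0 - t_1$ is non-zero.

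The step I expect to be the real obstacle is in the sufficiency direction: making the factorisation procedure legitimate over the non-domain $\D[t]$ — in particular justifying $\rho\Cj{\rho} = a\Cj{a}\,M_2$ and the cancellation of $a\Cj{a}$ — and, above all, showing that the remainder's leading coefficient $a$ is genuinely an invertible dual quaternion outside the quadratic-translation case. Once the two admissible factorisations of $p\Cj{p}$ have been pinned down, the necessity direction and the residual bookkeeping are routine, if somewhat computational.
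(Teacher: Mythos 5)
Your overall strategy is genuinely different from the paper's: you reduce everything to divisibility of the dual part $Q=p\Cj{d}+d\Cj{p}$ by $M_1=(t-t_0)(t-t_1)$ and then run the division-based factorization algorithm of \cite{hegedus13} directly over $\D[t]$-valued norms, whereas the paper splits into three cases according to the real factor structure of the primal part $p$ and treats the translational cases by explicit computation. Your necessity argument and your invertible-$a$ branch of the sufficiency argument are sound (the identity $\rho\Cj{\rho}=a\Cj{a}M_2$ does follow from $N=M_2^2+M_2(\rho+\Cj{\rho})+\rho\Cj{\rho}$ by degree comparison, and the non-positive-discriminant constraint correctly pins down the two admissible splittings of $p\Cj{p}$).

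However, there is a genuine gap in your fallback for non-invertible $a$. You correctly deduce that this case forces $p=M_1\in\R[t]$, i.e.\ $c$ is a quadratic translation, but your ansatz $c=(t-t_0-\eps u_0)(t-t_1-\eps u_1)$ only makes sense in $\DH[t]$ when $t_0,t_1$ are \emph{real}. When $t_0,t_1$ are complex conjugates — the bounded (elliptic or circular) translations, which is exactly Case~(b) of the paper — your linear system produces $u_0,u_1$ with genuinely complex entries, so the two ``factors'' are not dual quaternion polynomials and you have not produced a factorization. Nor can this be patched by tweaking the constants: any linear factors of $c$ must have primal parts multiplying to the irreducible real quadratic $p=t^2+1$ (after normalization), so their primal parts are $t\mp h'$ with $h'$ a \emph{pure unit quaternion}, not $t-t_0$ with $t_0\in\C\setminus\R$. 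This is why the paper devotes its Case~(b) to an explicit ansatz $F_1=t+p_1\qi+p_2\qj+p_3\qk+\eps(\cdots)$, $F_2=t-p_1\qi-p_2\qj-p_3\qk+\eps(\cdots)$ and solves the resulting system of nine equations, obtaining the two two-parameter families \eqref{sol:ell_trans}; some such argument (or a reduction to the known factorization of circular translations in \cite{Li2015}) is needed to close your proof. Your treatment of the real-root translation case ($t_0\neq t_1$ real) is fine and essentially reproduces the content of the paper's Case~(c) by other means.
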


\begin{proof}
  We distinguish between three cases:
  \begin{itemize}
  \item[(a)] $p$ does not have a real polynomial factor,
  \item[(b)] $p$ is an irreducible quadratic real polynomial, or
  \item[(c)] $p$ is the product of two distinct linear real polynomials.
  \end{itemize}
  The seemingly missing case of just one linear real factor is not compatible
  with the assumption of $c$ parametrizing a null cone motion. It can only happen in
  Case (a) that the points $[c(t_0)]$, $[c(t_1)]$ don't lie on the Study quadric
  since both (b) and (c) ensure that the dual part of $c\Cj{c}$ has a real
  polynomial factor with roots $t_0$, $t_1$. Thus it suffices to show the
  existence of factorizations in Case (b) and Case (c).

  Let us consider Case (a) at first. Here, the two roots $t_0$, $t_1$ are
  complex conjugates and we have $p\Cj{p}=s^2$ for the irreducible quadratic
  polynomial $s = (t - t_0)(t - t_1)\in\R[t]$ (recall that $c$ is assumed to be
  monic). An obviously necessary condition for $c$ to admit a factorization into
  linear polynomials is that $c\Cj{c}=s^2+\eps(p\Cj{d}+d\Cj{p})$ is a product of
  two quadratic polynomials with dual coefficients, that is $c\Cj{c} = (s+\eps
  \lambda_1)(s+\eps \lambda_2)$ for some $\lambda_1$, $\lambda_2\in\R[t]$. This
  implies $p\Cj{d}+d\Cj{p}=s(\lambda_1+\lambda_2)$ so that $s$ is a factor of
  $c\Cj{c}$ and the two points $[c(t_0)]$, $[c(t_1)]$ of tangency of the conic
  $c$ and the nullcone also lie on the Study quadric. For the converse
  statement, we appeal to \cite[Lemma~3]{hegedus13} which ensures existence of a
  factorization.

  Next, let us consider Case (b) where $p$ is an irreducible quadratic real
  polynomial, that is $c=p+\varepsilon d$ with $d \in \H[t]$. A suitable
  parameter transformation allows us to assume $p = t^2 + 1$. In this case, $c$
  describes a translation along a bounded quadratic curve (an ellipse or line
  segment). After a change of coordinates we can assume that all trajectories
  lie in planes orthogonal to the third coordinate axis and their major axes are
  parallel to the first coordinate axis. Thus, the parametrization is of the
  form $c=t^2+1+\varepsilon (\gamma_1t+ \gamma_0+b\qj t+a\qi)$, where $a\ge b
  \ge 0$, $a \neq 0$ are the lengths of their semi-axes and $\gamma_1$,
  $\gamma_0$ are arbitrary real numbers. As in \cite{Li2015}, we solve the
  equation $c=F_1 F_2$ for arbitrary linear factors $F_1$, $F_2\in\DH[t]$. Since
  the primal part of $c$ is a real polynomial, we get that the primal parts of
  $F_1$ and $F_2$ are conjugates of each other. Let us write
\begin{align*}
  F_1 &= t+p_1\qi + p_2\qj + p_3 \qk +\varepsilon(u_0 + u_1\qi + u_2\qj + u_3\qk),\\
  F_2 &= t-p_1\qi - p_2\qj - p_3 \qk +\varepsilon(v_0 + v_1\qi + v_2\qj + v_3\qk).
\end{align*}
Comparing coefficients in $c=F_1F_2$, we obtain a system of nine algebraic equations:
\begin{gather*}
  u_0+v_0 = \gamma_1,\quad  u_1+v_1 = 0,\quad  u_2+v_2 = b,\quad  u_3+v_3 = 0,\quad  p_1^2+p_2^2+p_3^2 =1,\\
  p_1(u_1-v_1) + p_2(u_2-v_2) + p_3(u_3-v_3) = \gamma_0,\quad
  p_1(v_0-u_0) + p_2(u_3+v_3) - p_3(u_2+v_2) = a,\\
  -p_1(u_3+v_3) + p_2(v_0-u_0) + p_3(u_1+v_1) = 0,\quad
  p_1(u_2+v_2) -p_2(u_1+v_1) + p_3(v_0-u_0) = 0.
\end{gather*}
For $a>b$, this system has two two parametric families of solutions:
\begin{equation}\label{sol:ell_trans}
\begin{gathered}
  p_1 = \pm\frac{\sqrt{a^2-b^2}}{a},\quad p_2 = 0,\quad p_3 = -\frac{b}{a},\quad
  u_2 = -v_2 + b,\quad u_3 = -v_3\\
  u_0 = \frac{\gamma_1}{2}\mp\frac{\sqrt{a^2-b^2}}{2},\quad u_1=-v_1= \pm\frac{a\gamma_0-2bv_3}{2\sqrt{a^2-b^2}},\quad
  v_0 =  \frac{\gamma_1}{2}\pm\frac{\sqrt{a^2-b^2}}{2}.
\end{gathered}
\end{equation}
If $a=b > 0$, which is the case if and only if $c$ is a circular translation, there
is only one family of solutions, namely
\begin{gather*}
  p_1=p_2=0,\quad p_3=-1,\quad u_0=v_0=\frac{\gamma_1}{2},\quad
  u_1=-v_1,\quad u_2=-v_2+b,\quad u_3=-v_3=-\frac{\gamma_0}{2}.
\end{gather*}
In case of $\gamma_0 = \gamma_1 = 0$, these solutions correspond to the ones
found in \cite{Li2015} for quadratic motion polynomials.

In the final Case (c) the polynomial $p$ is the product of two distinct linear
real polynomials $s_1$, $s_2\in\R[t]$. Let us find the reduced polynomial
$\widetilde{c}$ which parametrizes the same motion and fulfills the Study
condition. Following the proof of Lemma~\ref{lem:2} we can find the polynomial $
a= s_1s_2(2s_1s_2-\eps (d+\Cj{d})) \in \D[t]$ with dual number coefficients such
that $ac$ has a real norm polynomial. This product has the real polynomial factor
$(s_1s_2)^2$, so after reducing $ac$ and dividing off the leading coefficient we end
up with $\widetilde{c} \coloneqq (1-\eps(d+\Cj{d})/(2s_1s_2))c$. The norm
polynomial of $\widetilde{c}$ equals $s_1^2s_2^2$. Thus we can apply Lemma~3 of
\cite{hegedus13} to infer existence of a factorization $\widetilde{c} = F_1F_2$
such that $F_1\Cj{F}_1 = s_1^2$ and $F_2\Cj{F}_2 = s_2^2$. As $\widetilde{c}$ is
the product of $c$ with a rational dual function, we can multiply
$\widetilde{c}$ with the inverse of this function to obtain $c$, i.\,e.
$c=(1+\eps(d+\Cj{d})/(2s_1s_2))F_1F_2$. Let us use partial fraction
decomposition for the dual part of the first factor such that
$(1+\eps(d+\Cj{d})/(2s_1s_2))=(1+\eps\lambda_1/s_1)(1+\eps \lambda_2/s_2)$ for
some $\lambda_1$, $\lambda_2\in\R$. This now yields a factorization of $c$ as we
obtain $c=(1+\eps\lambda_1/s_1)F_1(1+\eps \lambda_2/s_2)F_2$.
\end{proof}

As we have seen in the proof, polynomials which fulfill Case (b) parametrize
bounded quadratic translations. In Case (c) they parametrize quadratic
translations along a curve which intersects the plane at infinity for two real
parameters. Thus, these polynomials parametrize translations along a hyperbola.

\begin{remark}
  The proof of Theorem~\ref{th:ncm-factorization} shows that all bounded
  quadratic translations admit a factorization. But only circular translations
  which fulfill the Study condition can be decomposed into two rotations, that
  is, they have factors which satisfy the Study condition. This confirms results
  of~\cite{Li2015}.
\end{remark}

The proof of Theorem~\ref{th:ncm-factorization} together with Lemma~\ref{lem:2}
shows that in Case~(b) and Case~(c) there exist infinitely many quadratic
polynomials which parametrize the same motion. In Case (b) choosing different
representations allows us to decompose the given motion into linear polynomials
in infinitely many different ways. In Case (c) however, the linear factors
obtained from different representations only differ by dual rational factors.
While the factorizations are different in algebraic sense, the underlying
kinematic decompositions are all the same. The obtained factors parametrize
translations and therefore can be realized by prismatic joints. As all quadratic
translations have trajectories parallel to a plane, they can always be realized
by coupling two prismatic joints with axes parallel to this plane but non-parallel to each
other.
Therefore, factorization does not allow for the construction of an
overconstrained linkage performing motions of type (c).

For Case (a) and (b) however we have seen in the proof of
Theorem~\ref{th:ncm-factorization} that we can find (at least) two different
factorizations. For the construction of overconstrained linkages we need
factorizations where one linear factor has a real norm polynomial, hence
parametrizes a rotation. The other factors are linear polynomials with non-real
norm polynomial and therefore parametrize vertical Darboux motions which can be
realized by cylindrical joints. Using these types of factorization we can
construct four-bar linkages consisting of two revolute and two cylindrical
joints which are able to perform the given motion. The degree of freedom
according to the formula of Chebyshev-Gr\"ubler-Kutzbach
\cite[Chapter~5]{angeles89} equals $0$, thus the linkages are overconstrained.

As a linkage corresponding to a motion of Case~(a) is described in \cite{siegele20}, we
focus on the construction of a linkage corresponding to Case~(b). Let $c=t^2+1+\varepsilon (\gamma_1t+
\gamma_0+b\qj t+a\qi)$ be an elliptic translation with $a>b\ge 0$. Note that
varying $\gamma_0$ and $\gamma_1$ yield different representations of the same
underlying motion. The linear factors obtained in the proof of
Theorem~\ref{th:ncm-factorization} have the norm polynomials
$t^2+1+\eps(t(\gamma_1\pm\sqrt{a^2-b^2})+\gamma_0)$. Thus, we can choose
$\gamma_0=0$ and $\gamma_1=\mp\sqrt{a^2-b^2}$ such that either the first, or the
second factor has real norm.

All factors of one family of factorizations parametrize motions around parallel
axes. Choosing two different factorizations from the same family therefore would
yield a four-bar linkage with two revolute and two cylindrical joints with
parallel axes. Such linkages however have in general two degrees of freedom.
Therefore we need to choose one factorization from each family. The two
different choices of $\gamma_1$ determine, which joints are the revolute joints.

\begin{example}
  Let us consider a translation along an ellipse with semi-major axis of length
  $a=2$ and semi-minor axis of length $b=1$. For $\gamma_1=\sqrt{a^2-b^2}$ we
  have $c=t^2+1+\varepsilon (\sqrt{3}t+\qj t+2\qi)$. Choosing $v_2=0$,
  $v_3=1/\sqrt{3}$ in Equation~\eqref{sol:ell_trans} we obtain the factorizations $c=F_1F_2=G_1G_2$ with
  \begin{align*}
    &F_1 = t + \frac{\sqrt{3}\qi-\qk}{2} - \varepsilon\frac{\qi-3\qj+\sqrt{3}\qk}{3},\quad
    &&F_2 = t - \frac{\sqrt{3}\qi-\qk}{2} + \varepsilon\frac{3\sqrt{3}+\qi+\sqrt{3}\qk}{3},\\
    &G_1 = t - \frac{\sqrt{3}\qi+\qk}{2} + \varepsilon\frac{3\sqrt{3}+\qi+3\qj-\sqrt{3}\qk}{3},\quad
    &&G_2 = t + \frac{\sqrt{3}\qi+\qk}{2} + \varepsilon\frac{-\qi+\sqrt{3}\qk}{3}.
  \end{align*}
  The axes of the motions parametrized by $F_1$ and $F_2$ as well as $G_1$ and
  $G_2$, respectively, are parallel. The angle between the non-parallel axes is
  $\pi/3$. The distance between the parallel axes is $1$ while the distance
  between the other axes is $4/3$. The obtained linkage is depicted in
  Figure~\ref{fig:RCRC-elliptic}. It can be shown that it has two operation
  modes, both are elliptic translations.
  \begin{figure}[tb]
    \centering
    \begin{overpic}[trim=1.5cm 3cm 2.4cm 3.3cm, clip, scale=2]{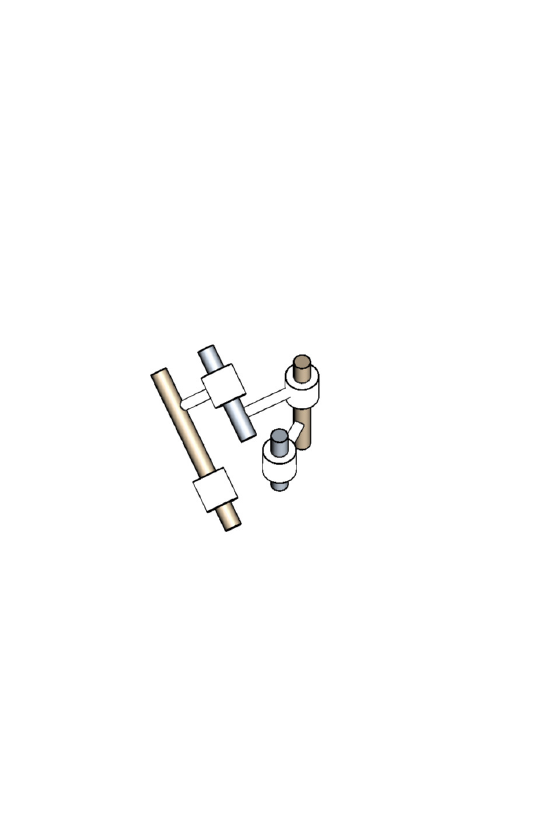}
\put(70,20){$F_1$}
\put(83,65){$F_2$}
\put(15,5){$G_1$}
\put(45,75){$G_2$}
\end{overpic}
    \caption{Spatial four-bar linkage with revolute joints $F_1$, $G_2$ and cylindrical joints $F_2$, $G_1$ whose coupler performs an elliptic translation.}
    \label{fig:RCRC-elliptic}
  \end{figure}
\end{example}

\section{Conclusion}
\label{sec:conclusion}

We have related space kinematics to the geometry of the projective space $\PD$
over the ring of dual numbers. This interpretation seems well suited for
kinematic visualization of certain differential geometric aspects and it also
provides the proper mathematical framework for the systematic study of
osculating motions. We presented results for ordinary and osculating tangents
and some ideas about osculating conics and their factorizability. Factorization
without the Study condition opens additional possibilities for the construction
of linkages with cylindrical joints.

\section*{Acknowledgment}

We thank the anonymous reviewers for their valuable suggestions that helped us
to improve the paper. This is in particular true for the proof of
Lemma~\ref{lem:aphi}. Johannes Siegele was supported by the Austrian Science
Fund (FWF): P~30673 (Extended Kinematic Mappings and Application to Motion
Design).

\bibliographystyle{plain}

\end{document}